\documentclass[a4paper,12pt,final]{amsart}
\usepackage{times,a4wide,mathrsfs,amssymb,dsfont,wasysym}

\newcommand{\C}{\mathbb{C}}

\newcommand{\QQ}{\mathbb{Q}}

\newcommand{\PP}{\mathbb{P}}
\newcommand{\LL}{\mathcal{L}}
\newcommand{\LLL}{\mathbb{L}}

\newcommand{\OO}{\mathcal O}
\newcommand{\Ss}{\mathcal S}

\newcommand{\FF}{\mathcal F}
\newcommand{\XX}{\mathcal X}

\newcommand{\CC}{\mathcal C}

\newcommand{\MM}{\mathcal M}

\newcommand{\pic}{\hbox{Pic}}

\newcommand{\wt}{\widetilde}
\newcommand{\ima}{\hbox{Im}}
\newcommand{\rom}{\romannumeral}

\newcommand{\one}{\mathds{1}}

\newtheorem{theorem}{Theorem}[section]

\newtheorem{lemma}[theorem]{Lemma}

\newtheorem{corollary}[theorem]{Corollary}
\newtheorem{proposition}[theorem]{Proposition}
\newtheorem{conjecture}[theorem]{Conjecture}
\newtheorem{problem}[theorem]{Problem}
\newtheorem{nonumbering}{Theorem}

\newtheorem{nonumberingc}{Corollary}

\newtheorem{convention}{Conventions}

\theoremstyle{definition}
\newtheorem{remark}[theorem]{Remark}
\newtheorem{definition}[theorem]{Definition}

\newtheorem{notation}[theorem]{Notation}

\newtheorem{nonumberingt}{Acknowledgements}

\begin{document}
\author[Robert Laterveer]
{Robert Laterveer}

\address{Institut de Recherche Math\'ematique Avanc\'ee,
CNRS -- Universit\'e 
de Strasbourg,\
7 Rue Ren\'e Des\-car\-tes, 67084 Strasbourg CEDEX,
FRANCE.}
\email{robert.laterveer@math.unistra.fr}

\title{On the Chow ring of Fano varieties of type $S2$}

\begin{abstract} We show that certain Fano eightfolds (obtained as hyperplane sections of an orthogonal Grassmannian, and studied by Ito--Miura--Okawa--Ueda and by Fatighenti--Mongardi) have a multiplicative Chow--K\"unneth decomposition. As a corollary, the Chow ring of these eightfolds behaves like that of K3 surfaces.
\end{abstract}

\keywords{Algebraic cycles, Chow groups, motives, Beauville's splitting property, multiplicative Chow--K\"unneth decomposition, Fano varieties of K3 type}

\subjclass{Primary 14C15, 14C25, 14C30.}

\maketitle

\section{Introduction}

   For a smooth projective variety $X$ over $\C$, let $A^i(X)=CH^i(X)_{\QQ}$ denote the Chow group of codimension $i$ algebraic cycles modulo rational equivalence with $\QQ$-coefficients, and let $A^i_{hom}(X)$ denote the subgroup of homologically trivial cycles. Intersection product defines a ring structure on $A^\ast(X)=\oplus_i A^i(X)$ \cite{F}. In the case of K3 surfaces, this ring structure has a remarkable property:

\begin{theorem}[Beauville--Voisin \cite{BV}]\label{K3} Let $S$ be a K3 surface. Let $D_i, D_i^\prime\in A^1(S)$ be a finite number of divisors. Then
  \[ \sum_i D_i\cdot D_i^\prime=0\ \ \ \hbox{in}\ A^2(S)_{}\ \Leftrightarrow\ \sum_i D_i\cdot D_i^\prime=0\ \ \ \hbox{in}\ H^4(S,\QQ)\ .\]
  \end{theorem}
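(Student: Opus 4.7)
The plan is to reduce the theorem to showing that the image of the intersection map
\[
A^1(S)\otimes A^1(S)\longrightarrow A^2(S)
\]
is a one-dimensional $\QQ$-subspace of $A^2(S)$ on which the cycle class map to $H^4(S,\QQ)\cong\QQ$ is injective. Once this is established, both directions of the biconditional follow immediately: the nontrivial direction ($\Rightarrow$) says that vanishing in $H^4(S,\QQ)$ forces vanishing in the image, hence in $A^2(S)$.

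First, I would construct a canonical zero-cycle class $\mathfrak{o}_S\in A^2(S)$. By Bogomolov--Mumford, every projective K3 surface contains a rational curve $C_0\subset S$, and all points on such a curve are mutually rationally equivalent in $S$ (being images of points on $\PP^1$). Setting $\mathfrak{o}_S:=[p]$ for $p\in C_0$ thus gives a well-defined class, pending a check that it is independent of the choice of $C_0$.

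The key step is then to prove that $D\cdot D'\in\QQ\cdot\mathfrak{o}_S$ for any divisors $D,D'\in A^1(S)$. By bilinearity it suffices to treat $D=D'$ with $D$ very ample. One takes a Lefschetz pencil in $|D|$, so that the generic member is a smooth curve $C$ of genus $g=1+D^2/2$, and shows that $D\cdot D$, realized as an effective zero-cycle on $C$, is rationally equivalent in $S$ to $D^2\cdot\mathfrak{o}_S$. The input needed is that effective zero-cycles of this degree supported on general curves in $|D|$ are all mutually rationally equivalent in $S$ — a statement that reduces (via monodromy on the base of the pencil) to the assertion that $S$ is swept out by a suitable family of rational curves. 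Independence of $\mathfrak{o}_S$ from the choice of $C_0$ then follows a posteriori, since for any two rational curves $C_0,C_0'\subset S$ one has $D^2\cdot[p] = D^2\cdot[p']$ and one can take $D$ with $D^2\neq 0$.

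The main obstacle is precisely this last geometric step: producing enough rational curves on $S$ to identify the Chow class of the self-intersection of an ample divisor. This is the technical heart of the Beauville--Voisin argument, and it relies on a Mumford-type rigidity of zero-cycles combined with a non-trivial existence result for rational curves on K3 surfaces (typically proved by specialization from the case where $S$ admits an elliptic fibration with many singular fibres). Once it is in hand, the cycle class map is clearly injective on $\QQ\cdot\mathfrak{o}_S$ since $[\mathfrak{o}_S]\in H^4(S,\QQ)$ is the nonzero point class, and the biconditional follows.
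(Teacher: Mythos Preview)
The paper does not prove this theorem at all: it is quoted as a known result of Beauville--Voisin \cite{BV} and serves only as motivation for the rest of the paper. There is therefore no ``paper's own proof'' to compare against.

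That said, your sketch is broadly in the spirit of the original \cite{BV}, but the key step is not quite the argument they use. Beauville and Voisin do not work with a Lefschetz pencil and monodromy to identify the self-intersection $D^2$; instead they show that any ample class on $S$ is, after multiplying by a positive integer, represented by a sum of rational curves (this is the Bogomolov--Mumford input, proved by specialization). Once $D=\sum_i R_i$ with each $R_i$ rational, the intersection $D\cdot D'=\sum_i R_i\cdot D'$ is a sum of zero-cycles each supported on a rational curve, hence each equal to a multiple of $\mathfrak{o}_S$. Your pencil/monodromy formulation would need an additional argument to reach the same conclusion, and as written it is too vague to be a proof: the assertion that ``effective zero-cycles of this degree supported on general curves in $|D|$ are all mutually rationally equivalent in $S$'' does not follow from monodromy alone (monodromy acts on cohomology, not on Chow groups), and you would still need to produce the rational curves and relate them to the pencil members.
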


In the wake of this result (combined with results concerning the Chow ring of abelian varieties), Beauville has asked for which varieties the Chow ring behaves similarly to Theorem \ref{K3}. (This is the problem of determining which varieties verify the ``splitting property'' of \cite{Beau3}; this circle of ideas has notably led to the famous ``Beauville--Voisin conjecture'' concerning the Chow ring of hyperk\"ahler varieties \cite{Beau3}, \cite{V9}.)
We briefly state this problem here as follows:

\begin{problem}[Beauville \cite{Beau3}]\label{prob} Find a class $\CC$ of varieties (containing $K3$ surfaces, abelian varieties and hyperk\"ahler varieties, and stable under taking products), such that for any $X\in\CC$, the Chow ring of $X$ admits a multiplicative bigrading $A^\ast_{(\ast)}(X)$, with
  \[ A^i(X)=\bigoplus_{j= 0}^i A^i_{(j)}(X)\ \ \ \hbox{for\ all\ }i\ .\] 
This bigrading should split the conjectural Bloch--Beilinson filtration, in particular 
  \[ A^i_{hom}(X)= \bigoplus_{j\ge 1} A^i_{(j)}(X)\ .\]
  \end{problem}
  
 This question is hard to answer in practice, since we do not have the Bloch--Beilinson filtration at our disposal. 
 
 %It has been conjectured that hyperk\"ahler varieties are in $\CC$ \cite[Introduction]{Beau3}. 
%Also, not all Calabi--Yau varieties can be in $\CC$ \cite[Example 1.7(b)]{Beau3}. However, in light of \cite{V13} and \cite{LFu}, it seems likely (generic) Calabi--Yau complete intersections are in $\CC$. 
An interesting alternative approach to Problem \ref{prob} (as well as a reinterpretation of Theorem \ref{K3}) is provided by the concept of {\em multiplicative Chow--K\"unneth decomposition\/}, giving rise to unconditional constructions of a bigraded ring structure on the Chow ring of many particular varieties \cite{SV}, \cite{V6}, \cite{SV2}, \cite{FTV}, \cite{FV}, \cite{LV}. Thus one is led to the following problem, which is more concrete than Problem \ref{prob}:

\begin{problem}[Shen--Vial \cite{SV}]\label{prob2} Describe the class $\CC^\prime$ of varieties having a multiplicative Chow--K\"unneth decomposition.
\end{problem}

To relate this to Problem \ref{prob}, one might naively conjecture that $\CC=\CC^\prime$ (and that for any $X\in\CC=\CC^\prime$, the induced bigradings on $A^\ast(X)$ coincide). As a partial answer towards Problem \ref{prob2}, I have proposed the following:

\begin{conjecture}\label{conj} Let $X$ be a smooth projective Fano variety of K3 type (i.e. $\dim X=2m$ and the Hodge numbers $h^{p,q}(X)$ are $0$ for all $p\not=q$ except for $h^{m-1,m+1}(X)=h^{m+1,m-1}(X)=1$). Then $X$ has a multiplicative Chow--K\"unneth decomposition.
\end{conjecture}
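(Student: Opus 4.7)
The plan is to reduce the statement, for each specific family of Fano varieties of K3 type, to the known multiplicative Chow--K\"unneth decomposition of a K3 surface. The Hodge-theoretic hypothesis $h^{p,q}(X)=0$ for $p\not=q$ (except for $h^{m\pm1,m\mp1}=1$) says that the cohomology of $X$ looks, up to Tate twists, like that of a K3 surface concentrated in the middle row. The first step is thus to show that the rational Chow motive $h(X)$ splits as a sum of Tate motives $\one(-i)$ for $i\not=m$, together with a single summand $\ttt(X)(1-m)$ in the middle which should be motivically isomorphic to the transcendental motive $\ttt(S)$ of a suitable K3 surface $S$. The Chow--K\"unneth projectors $\pi_X^{2i}$ for $i\not=m$ would come from a basis of $A^i(X)$ paired with its dual under intersection, and $\pi_X^{2m}$ would be constructed from an explicit geometric correspondence $X\dashrightarrow S\times S$ realising the isomorphism of transcendental motives.

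For the specific eightfolds studied in the paper (hyperplane sections of an orthogonal Grassmannian, in the work of Ito--Miura--Okawa--Ueda and Fatighenti--Mongardi) a distinguished K3 surface $S$ is available, together with a correspondence that is already known to induce a Hodge isometry between transcendental cohomologies. I would exploit this to build a correspondence $\Gamma\in A^\ast(X\times S)$ such that $\Gamma^\ast$ realises an isomorphism of transcendental motives with rational coefficients. Combined with the Beauville--Voisin MCK on $S$, this transports a CK decomposition to $X$; the ambient Grassmannian then provides a natural supply of ``tautological'' cycles needed to define the remaining projectors $\pi_X^{2i}$.

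The essential step, and the main obstacle, is to upgrade this CK decomposition to a \emph{multiplicative} one, i.e.\ to verify that the small diagonal $\Delta_X^{sm}\in A^{2\dim X}(X^3)$ lies in the subring generated by the $\pi_X^i$ pulled back from the three factors. Following the strategy of \cite{SV}, \cite{FTV}, \cite{LV}, my plan is to realise $\Delta_X^{sm}$ as the restriction of a universally defined cycle on the total family $\XX\to B$ of hyperplane sections of the orthogonal Grassmannian. If one can establish a Franchetta-type property for this family $\XX\to B$ and its fibre powers $\XX^{\times_B k}$ up to $k=3$ --- i.e.\ that generically defined cycles in $A^\ast(\XX^{\times_B 3})$ inject into cohomology on a very general fibre --- then the required identities among $\pi_X^i$'s can be checked cohomologically, where they follow from the motivic decomposition above.

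Concretely, the hardest estimates will be the Franchetta property in codimension $\dim X$ on $\XX^{\times_B 2}$ and codimension $2\dim X$ on $\XX^{\times_B 3}$; these are typically proved by stratifying the relative Chow groups using the Grassmannian's Schubert decomposition and a relative version of Voisin's Hilbert schemes argument, and I would expect the bulk of the work to be concentrated there. Once this Franchetta input is in place, the MCK property, and hence the corollary about the Beauville-type splitting of the Chow ring of $X$, follows formally from the K3-like shape of the motive of $X$.
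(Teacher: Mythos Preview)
First, note that the statement you are addressing is a \emph{conjecture}: the paper does not prove it in general, but only verifies it for the single family of type $S2$ eightfolds (Theorem \ref{main}). So any ``proof'' here can only be a strategy for specific families, and your proposal is rightly phrased that way.

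For the type $S2$ family your outline is close to the paper's, but with two substantive differences.

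\medskip

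\textbf{The motivic isomorphism.} You propose to lift a known Hodge isometry $H^2_{tr}(S)\cong H^{2m}_{tr}(X)$ to an isomorphism of transcendental Chow motives via some correspondence argument. The paper does something more direct and more robust: the restriction $\pi\colon X\to F_1$ of the tautological $\PP^3$-fibration $p_1\colon \operatorname{OGr}(3,8)\to F_1$ is a $\PP^2$-bundle over $F_1\setminus S$ degenerating to a $\PP^3$-bundle over $S$ (Lemma \ref{lemfib}), and Jiang's projectivization formula then gives outright
\[
  h(X)\ \cong\ \bigoplus_{j=0}^{2} h(F_1)(-j)\ \oplus\ h(S)(-3)\ \ \ \hbox{in}\ \MM_{\rm rat}
\]
(Theorem \ref{main0}). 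Since $F_1$ is a quadric this yields $h(X)\cong h(S)(-3)\oplus\bigoplus\one(\ast)$ with an explicit, universally defined correspondence $\Gamma$. No lifting from cohomology is needed.

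\medskip

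\textbf{Where the Franchetta input is placed.} The key divergence is in how you check multiplicativity. You propose to prove the Franchetta property for $\XX^{\times_B k}\to B$ up to $k=3$ and then verify the vanishing of $\pi^k_X\circ\Delta_X^{sm}\circ(\pi^i_X\times\pi^j_X)$ directly on the $X$-side in cohomology. Establishing Franchetta for $\XX\times_B\XX\times_B\XX$ (a family of $24$-folds) would be the ``bulk of the work'', as you say, and it is not clear this is feasible.

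The paper avoids this entirely. Using the universal isomorphism $\Gamma$ above, one pushes the fibrewise homologically trivial cycle $\Phi_{ijk}\in A^{16}(\XX^{\times_B 3})$ forward via $(\Gamma,\Gamma,\Gamma)_\ast$ to a cycle on (direct sums of) $\Ss^{\times_{B_0} 3}$, $\Ss^{\times_{B_0} 2}$, $\Ss$ and $B_0$. On the triple product the image lands in $A^7(S_b^3)$, which is zero for the trivial reason that $\dim S_b^3=6$. So one only needs the Franchetta property for $\Ss\to B_0$ and $\Ss\times_{B_0}\Ss\to B_0$ (Theorem \ref{gfc}), which is accessible by the methods of \cite{PSY}, \cite{FLV}. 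Since $(\Gamma|_b)^{\otimes 3}$ acts injectively, this gives $\Phi_{ijk}|_b=0$.

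In short: your route would work if you could carry it out, but the paper's shortcut --- transport to the K3 side \emph{before} invoking Franchetta, so that the triple-product case dies for dimension reasons --- is what makes the argument actually go through.
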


This conjecture is answered positively in a few scattered cases \cite{d3}, \cite{Ver}, \cite{B1B2}, \cite{FLV2}.
The aim of the present note is to provide some more evidence for Conjecture \ref{conj}, by considering certain Fano eightfolds studied by Ito--Miura--Okawa--Ueda \cite{IMOU} and Fatighenti--Mongardi \cite{FM}. Following \cite{FM}, we say that a variety {\em of type $S2$\/} is a smooth divisor in a certain very ample linear system $\LL$ on the orthogonal Grassmannian $\operatorname{OGr}(3,8)$. Varieties of type $S2$ are Fano eightfolds of K3 type (cf. subsection \ref{ss:8} below). The main result of this note is a verification of Conjecture \ref{conj} for varieties of type $S2$:

\begin{nonumbering}[=Theorem \ref{main}] Let $X$ be a variety of type $S2$.
 % \[ X= \operatorname{OGr}(3,8)\cap H\ ,\]
%  where $\operatorname{OGr}(3,8)$ is the orthogonal Grassmannian. 
Then $X$ has a multiplicative Chow--K\"unneth decomposition.
  \end{nonumbering}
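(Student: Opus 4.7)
The ambient variety $Y:=\operatorname{OGr}(3,8)$ is a homogeneous $\operatorname{SO}(8)$-variety of dimension $9$, so its Chow ring is generated by Schubert classes and its Chow motive $h(Y)$ is a direct sum of Tate motives. Consequently $Y$ admits a canonical multiplicative Chow--K\"unneth decomposition, and the plan is to transport this structure to the hyperplane section $X\subset Y$, where the only essential new feature is the primitive middle cohomology $H^8(X,\QQ)_{\mathrm{prim}}$, which is of K3 type.

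First I would construct the Chow--K\"unneth decomposition itself. For $k\neq 8$, the Lefschetz hyperplane theorem together with hard Lefschetz imply that $H^k(X,\QQ)$ is spanned by restrictions of Schubert classes from $Y$, so one obtains explicit projectors $\pi_X^k\in A^8(X\times X)$ as sums $\sum_\alpha Z_\alpha^{}\times Z_\alpha^\vee$ over a Schubert basis of the appropriate degrees. The middle projector is $\pi_X^8:=\Delta_X-\sum_{k\neq 8}\pi_X^k$, and splits further as $\pi_X^{8,\mathrm{alg}}+\pi_X^{8,\mathrm{tr}}$ by isolating the algebraic subspace of $H^{4,4}(X,\QQ)$ from the transcendental piece carrying the K3-type Hodge structure.

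Second, I would verify multiplicativity, i.e.\ that the small diagonal $\delta_X\in A^{16}(X^3)$ is supported on the summand $\bigoplus_{i+j=k}\pi_X^i\otimes\pi_X^j\otimes\pi_X^k$. Triples involving only the Lefschetz projectors reduce by restriction to the analogous identity on $Y$, which holds because $h(Y)$ is a sum of Tate motives. Triples involving $\pi_X^{8,\mathrm{alg}}$ reduce similarly, provided the algebraic classes in $H^{4,4}(X)$ are Chow-theoretically represented by (restrictions of) Schubert cycles and powers of the hyperplane class, as one expects. The nontrivial content is therefore concentrated in the transcendental piece $t(X):=(X,\pi_X^{8,\mathrm{tr}})$.

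The hard part is precisely this last point. In the spirit of \cite{FLV2} and related work on Fano varieties of K3 type, the natural route is to use the Ito--Miura--Okawa--Ueda and Fatighenti--Mongardi description of $X$ to exhibit an algebraic correspondence $\Gamma\in A^\ast(S\times X)$ with $S$ an associated K3 surface, inducing an isomorphism of Chow motives $t(S)(-3)\xrightarrow{\sim}t(X)$ (such an $S$ is strongly suggested by the K3-type Hodge structure and, if available, by a Kuznetsov K3 component of $D^b(X)$). Granted such a $\Gamma$, multiplicativity of $t(X)$ reduces to the Beauville--Voisin relation for $S$ (Theorem \ref{K3}) together with the known MCK of $h(S)$ \cite{SV}. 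If a geometric $S$ is not directly available, the fallback is a Franchetta-type spread argument over $B:=\PP H^0(Y,\LL)^{\mathrm{sm}}$: one shows that generically defined cycles on the fibres $X_b$, $X_b\times X_b$ and $X_b^3$ inject into cohomology in the relevant codimensions, whereupon the multiplicativity identities can be checked after applying the cycle class map, where they follow from the Hodge-theoretic K3-type property.
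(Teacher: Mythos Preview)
Your outline identifies the right ingredients --- the CK decomposition built from Schubert classes, the correspondence to an associated K3 surface $S$, and a Franchetta-type spread argument --- and these are indeed all used in the paper. However, the way you propose to combine them contains a genuine gap at the crucial step.

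The problematic sentence is: ``Granted such a $\Gamma$, multiplicativity of $t(X)$ reduces to the Beauville--Voisin relation for $S$ together with the known MCK of $h(S)$.'' An isomorphism of Chow motives $t(X)\cong t(S)(-3)$ (or even $h(X)\cong h(S)(-3)\oplus\bigoplus\one(\ast)$) does \emph{not} transport the multiplication structure: the small diagonal $\Delta_X^{sm}$ is not sent to $\Delta_S^{sm}$ under $\Gamma^{\times 3}$, so there is no direct reduction of the multiplicativity constraints on $X$ to those on $S$. Likewise, your claim that ``triples involving only the Lefschetz projectors reduce by restriction to the analogous identity on $Y$'' is not immediate, since $\Delta_X^{sm}$ is not the restriction of $\Delta_Y^{sm}$. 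Your fallback --- a Franchetta property for $\XX\times_B\XX\times_B\XX$ directly --- would require controlling $A^{16}(X_b^3)$, which the paper does not attempt and which looks hard.

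What the paper actually does is this. Working over the open $B_0\subset B$ where the associated K3 surface $S_b$ exists (and invoking a spread lemma to get the general $X$), one first upgrades the fibrewise isomorphism $h(X_b)\cong h(S_b)(-3)\oplus\bigoplus\one(\ast)$ to a \emph{universal} isomorphism $\Gamma$ over $B_0$. One then pushes the obstruction cycle $\Phi_{ijk}:=\pi^k_\XX\circ\Delta_\XX^{sm}\circ(\pi^i_\XX\times\pi^j_\XX)$ through $(\Gamma,\Gamma,\Gamma)$. The image decomposes into pieces living in $A^7(S_b^3)$, in $A^\ast(S_b^2)$, in $A^\ast(S_b)$, and in $A^\ast(\mathrm{pt})$. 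The key observation --- which replaces your appeal to the MCK of $S$ --- is that the triple piece lands in $A^7(S_b^3)=0$ for pure dimension reasons ($\dim S_b^3=6$). The remaining pieces are fibrewise homologically trivial and are killed by the Franchetta property for $\Ss\times_{B_0}\Ss$ and for $\Ss$ (Theorem~\ref{gfc}), which the paper establishes separately for this particular parametrization. Injectivity of $(\Gamma\vert_b)^{\times 3}$ then forces $\Phi_{ijk}\vert_b=0$. So the Beauville--Voisin relation does enter, but only indirectly, through the proof of the Franchetta property for $\Ss\times_{B_0}\Ss$; it is not used to compare multiplication maps.
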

  
This is proven by first showing that for a general variety $X$ of type $S2$,  certain genus $7$ K3 surfaces that are naturally associated to $X$ are also related to $X$ on the level of Chow motives (Theorem \ref{main0}).
 
  As a nice bonus, the theorem implies that the Chow ring of these Fano eightfolds behaves like that of K3 surfaces:
  
 \begin{nonumberingc}[=Corollary \ref{cor}]  Let $X$ be a variety of type $S2$. Let $R^5(X)\subset A^5(X)$ denote the subgroup
   \[ R^5(X):= \langle A^1(X)\cdot A^4(X), \ A^2(X)\cdot A^3(X) ,\ c_5(T_X),\ \ima\bigl( A^\ast(\operatorname{OGr}(3,8))\to A^\ast(X)\bigr) \rangle\ .\]
   Then the cycle class map induces an injection
   \[ R^5(X)\ \hookrightarrow\ H^{10}(X,\QQ)\cong \QQ^4\ .\]
  \end{nonumberingc}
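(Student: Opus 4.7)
The plan is to exploit the multiplicative Chow--K\"unneth decomposition provided by Theorem \ref{main}, which induces a bigrading $A^\ast(X)=\bigoplus_j A^\ast_{(j)}(X)$ satisfying $A^i_{(j)}(X)\cdot A^{i'}_{(j')}(X)\subset A^{i+i'}_{(j+j')}(X)$. The strategy is to show that every generator of $R^5(X)$ lies in the degree-zero piece $A^5_{(0)}(X)$, and then to show that $A^5_{(0)}(X)$ injects into cohomology.

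For the first step, the equality $A^1(X)=A^1_{(0)}(X)$ follows from the Fano hypothesis: since $h^{0,1}(X)=0$, one has $A^1_{hom}(X)=0$. The inclusion $c_k(T_X)\in A^k_{(0)}(X)$ is a standard consequence of MCK in the sense of Shen--Vial \cite{SV}. The image of $A^\ast(\operatorname{OGr}(3,8))\to A^\ast(X)$ lies in $A^\ast_{(0)}(X)$ because the Chow ring of the orthogonal Grassmannian is generated by Chern classes of tautological bundles, and their restrictions to $X$ fall under the same Shen--Vial criterion. The more delicate inputs are the products $A^1\cdot A^4$ and $A^2\cdot A^3$: for these one wants $A^i(X)=A^i_{(0)}(X)$ for $i=2,3$, which is compatible with $H^4(X)$ and $H^6(X)$ being of Hodge--Tate type; and one wants the K3-like piece $A^4_{(4)}(X)$ (corresponding to the transcendental Hodge type $(3,5)+(5,3)$ in $H^8(X)$) to be annihilated by divisors. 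This last point follows from $A^5_{(4)}(X)=0$, which is reasonable because the weight-$8$ K3-like transcendental motive of $X$ contributes nontrivially only to $A^4$ and not to $A^5$.

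For the second step, the Hodge structure on $H^{10}(X,\QQ)$ is purely of type $(5,5)$, which forces the Chow--K\"unneth component $h^{10}(X)$ to be a direct sum of Lefschetz motives $\LLL^5$; hence $A^5(h^{10}(X))=A^5_{(0)}(X)\hookrightarrow H^{10}(X,\QQ)\cong\QQ^4$, giving the desired injection.

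The principal obstacle is in the first step: verifying rigorously that $A^2(X)$ and $A^3(X)$ sit entirely in the degree-zero piece of the bigrading. For $A^1$ this uses only a one-line Hodge-theoretic input, but for $A^2$ and $A^3$ one must invoke the motivic structure more substantially, most naturally via the link to the genus-$7$ K3 surfaces provided by Theorem \ref{main0}, which allows the relevant vanishing to be transferred from the K3 setting (where it is well-established by \cite{BV} and its consequences) to $X$.
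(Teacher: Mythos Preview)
Your two-step plan---first show $R^5(X)\subset A^5_{(0)}(X)$, then show $A^5_{(0)}(X)\hookrightarrow H^{10}(X,\QQ)$---is exactly the paper's structure. But the execution of both steps has gaps.

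For Step~1 you are working much too hard, and the detour through ``$A^4_{(4)}(X)$'' is confused. Theorem~\ref{main0} already gives $A^i_{hom}(X)=0$ for every $i\neq 5$; since $A^i_{(j)}(X)\subset A^i_{hom}(X)$ whenever $j\neq 0$, this instantly yields $A^i(X)=A^i_{(0)}(X)$ for $i=1,2,3,4$, and multiplicativity then puts $A^1\cdot A^4$ and $A^2\cdot A^3$ into $A^5_{(0)}$ with no further argument. Your worry about a ``K3-like piece $A^4_{(4)}(X)$ corresponding to the transcendental $(3,5)+(5,3)$ in $H^8$'' is doubly off: $A^4_{(4)}(X)=(\Pi^4_X)_\ast A^4(X)$ is controlled by $h^4(X)$, not $h^8(X)$; and the transcendental summand $t(S)(-3)\subset h^8(X)$ has $A^j(t(S)(-3))=A^{j-3}(t(S))$, which is nonzero only for $j=5$, so it contributes to $A^5(X)$ and \emph{not} to $A^4(X)$---the opposite of what you wrote. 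Finally, $c_j(T_X)\in A^j_{(0)}(X)$ and the containment of the $\operatorname{OGr}(3,8)$-image in $A^\ast_{(0)}(X)$ are not automatic consequences of having an MCK decomposition; they are the second clause of Theorem~\ref{main}, established there via the Franchetta property for the family.

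For Step~2, the implication ``$H^{10}(X)$ is purely of type $(5,5)$, hence $h^{10}(X)\cong\bigoplus\one(-5)$'' is not valid in general (it would need the Hodge conjecture plus a finiteness input). What makes it true here is the \emph{explicit construction} of $\pi^{10}_X$ in the proof of Theorem~\ref{main}: it is built as a sum $\sum_k a_k^\vee\times a_k$ of products of algebraic classes restricted from $\operatorname{OGr}(3,8)$. The paper expresses this as a Bloch--Srinivas support argument: $\pi^{10}_X$ is supported on $V\times W$ with $\dim W=3$, so its action on $A^5_{hom}(X)$ factors through $A^0_{hom}(\wt{W})=0$, forcing $A^5_{(0)}(X)\cap A^5_{hom}(X)=0$.
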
 
   
  Since $A^5_{hom}(X)$ is infinite-dimensional (for $X$ general, there is an isomorphism $A^5_{hom}(X)\cong A^2_{hom}(S)$, where $S$ is an associated K3 surface, cf. Theorem \ref{main0}), this is just as remarkable as Theorem \ref{K3}. 
   
It would be interesting to test Conjecture \ref{conj} for the other Fano varieties of K3 type exhibited in \cite{FM} and \cite{BFM}. I hope to return to this in the near future.

\noindent
    
\vskip0.6cm

\begin{convention} In this note, the word {\sl variety\/} will refer to a reduced irreducible scheme of finite type over the field of complex numbers $\C$. 
{\bf All Chow groups will be with $\QQ$-coefficients, unless indicated otherwise:} For a variety $X$, we will write $A_j(X):=CH_j(X)_{\QQ}$ for the Chow group of dimension $j$ cycles on $X$ with rational coefficients.
For $X$ smooth of dimension $n$, the notations $A_j(X)$ and $A^{n-j}(X)$ will be used interchangeably. 
The notation
$A^j_{hom}(X)$ will be used to indicate the subgroups of 
homologically trivial  cycles.
%For a morphism between smooth varieties $f\colon X\to Y$, we will write $\Gamma_f\in A^\ast(X\times Y)$ for the graph of $f$, and ${}^t \Gamma_f\in A^\ast(Y\times X)$ for the transpose correspondence.

We will write  $\MM_{{\rm rat}}$ for the contravariant category of Chow motives (i.e., pure motives as in \cite{Sc}, \cite{MNP}).%and $\MM_{{\rm rat}}$ for the more usual category of Chow motives (i.e., with Hom--groups defined by $A^\ast(X\times Y)_{}$).
%We will write $H^j(X)=H^j(X,\QQ)$ for singular cohomology, and $H_j(X)=H_j^{BM}(X,\QQ)$ for Borel--Moore homology. 
\end{convention}

\section{Preliminaries}

\subsection{MCK decomposition}
\label{ssmck}

\begin{definition}[Murre \cite{Mur}] Let $X$ be a smooth projective variety of dimension $n$. We say that $X$ has a {\em CK decomposition\/} if there exists a decomposition of the diagonal
   \[ \Delta_X= \Pi^0_X+ \Pi^1_X+\cdots +\Pi^{2n}_X\ \ \ \hbox{in}\ A^n(X\times X)\ ,\]
  such that the $\Pi^i_X$ are mutually orthogonal idempotents and $(\Pi^i_X)_\ast H^\ast(X)= H^i(X)$.
  
  (NB: ``CK decomposition'' is shorthand for ``Chow--K\"unneth decomposition''.)
\end{definition}

\begin{remark} The existence of a CK decomposition for any smooth projective variety is part of Murre's conjectures \cite{Mur}, \cite{J2}. 
%If a quotient variety $X$
%has finite--dimensional motive, and the K\"unneth components are algebraic, then $X$ has a CK decomposition (this can be proven just as \cite{J2}, where this is stated for smooth $X$).
\end{remark}

\begin{definition}[Shen--Vial \cite{SV}] Let $X$ be a smooth projective variety of dimension $n$. Let $\Delta^X_{sm}\in A^{2n}(X\times X\times X)$ be the class of the small diagonal
  \[ \Delta_X^{sm}:=\bigl\{ (x,x,x)\ \vert\ x\in X\bigr\}\ \subset\ X\times X\times X\ .\]
  An MCK decomposition is a CK decomposition $\{\Pi^i_X\}$ of $X$ that is {\em multiplicative\/}, i.e. it satisfies
  \[ \Pi^k_X\circ \Delta_X^{sm}\circ (\Pi^i_X\times \Pi^j_X)=0\ \ \ \hbox{in}\ A^{2n}(X\times X\times X)\ \ \ \hbox{for\ all\ }i+j\not=k\ .\]
  
 (NB: ``MCK decomposition'' is shorthand for ``multiplicative Chow--K\"unneth decomposition''.) 
  \end{definition}
  
  \begin{remark} The small diagonal (seen as a correspondence from $X\times X$ to $X$) induces the {\em multiplication morphism\/}
    \[ \Delta_X^{sm}\colon\ \  h(X)\otimes h(X)\ \to\ h(X)\ \ \ \hbox{in}\ \MM_{\rm rat}\ .\]
 Suppose $X$ has a CK decomposition
  \[ h(X)=\bigoplus_{i=0}^{2n} h^i(X)\ \ \ \hbox{in}\ \MM_{\rm rat}\ .\]
  By definition, this decomposition is multiplicative if for any $i,j$ the composition
  \[ h^i(X)\otimes h^j(X)\ \to\ h(X)\otimes h(X)\ \xrightarrow{\Delta^X_{sm}}\ h(X)\ \ \ \hbox{in}\ \MM_{\rm rat}\]
  factors through $h^{i+j}(X)$.
  It follows that if $X$ has an MCK decomposition, then setting
    \[ A^i_{(j)}(X):= (\Pi_X^{2i-j})_\ast A^i(X) \ ,\]
    one obtains a bigraded ring structure on the Chow ring: that is, the intersection product sends 
    $A^i_{(j)}(X)\otimes A^{i^\prime}_{(j^\prime)}(X) $ to  $A^{i+i^\prime}_{(j+j^\prime)}(X)$.
      
  The property of having an MCK decomposition is severely restrictive, and is closely related to Beauville's ``weak splitting property'' \cite{Beau3}. For more ample discussion, and examples of varieties with an MCK decomposition, we refer to \cite[Section 8]{SV} and also \cite{V6}, \cite{SV2}, \cite{FTV}, \cite{LV}, \cite{FLV2}.
    \end{remark}

\subsection{Varieties of type $S2$}
\label{ss:8}

\begin{notation}\label{not} Let $\operatorname{OGr}(3,8)$ denote the orthogonal Grassmannian of $3$-dimensional isotropic subspaces of an $8$-dimensional vector space equipped with a bilinear form. As explained in \cite[Section 2]{IMOU} and \cite[Section 3.7]{FM}, there are morphisms
  \[ \begin{array}[c]{ccccc}
       &&\operatorname{OGr}(3,8)&&\\
       &{\scriptstyle {}^{p_1}}\swarrow&&\searrow{\scriptstyle {}^{p_2}}&\\
       &&&&\\
       F_1&&&&F_2\\
       \end{array}\]
       where $F_1\cong F_2$ are the two connected components of $\operatorname{OGr}(4,8)$, and the $p_i$ are $\PP^3$-fibrations.
     The Picard group of $\operatorname{OGr}(3,8)$ has rank $2$ and is generated by the pullbacks $(p_i)^\ast \pic(F_i)$, $i=1,2$.
     The line bundle 
     \[ \LL:= (p_1)^\ast \OO_{F_1}(1)\otimes (p_2)^\ast \OO_{F_2}(1)     \ \ \ \in\ \pic(  \operatorname{OGr}(3,8))\]
     is very ample, and there are natural isomorphisms
     \begin{equation}\label{iso} H^0(\operatorname{OGr}(3,8),\LL)\cong H^0(F_1, (p_1)_\ast \LL)\cong H^0(F_2, (p_2)_\ast \LL)\ .\end{equation}
      \end{notation}

\begin{definition} A variety {\em of type $S2$\/} is by definition a smooth hypersurface in the linear system $\vert\LL\vert$ on $\operatorname{OGr}(3,8)$ (cf. Notation \ref{not}).
\end{definition}

\begin{proposition}\label{8} Let $X$ be a a variety of type $S2$. Then $X$ is a Fano variety of dimension $8$.
The Hodge diamond of $X$ is
\[ \begin{array}[c]{ccccccccc}
	&&&&1&&&&\\
	&&&  &2&  &&&\\
	&&&&3&&&&\\
	&&&&4&&&&\\	
	0& \dots&\dots&1&24&1&\dots&\dots&0\\
	&&&&4&&&&\\	&&&&3&&&&\\
	&&&  &2&  &&&\\
	&&&&1&&&&\\
	\end{array}\]	
(where all empty entries are $0$).	
\end{proposition}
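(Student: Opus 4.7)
The proof naturally splits into three parts: the dimension of $X$, the Fano property, and the Hodge diamond.

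First, the dimension is immediate: the orthogonal Grassmannian $G := \operatorname{OGr}(3,8)$ has dimension $3 \cdot 5 - \binom{4}{2} = 9$, so any smooth hypersurface in $\vert \LL \vert$ has dimension $8$. For the Fano property, I would compute the canonical class of $G$ via the $\PP^3$-bundle structure $p_1 \colon G \to F_1$ (Notation \ref{not}): combining the relative canonical bundle formula $K_{G/F_1} = -4\zeta_1 + p_1^\ast\det \mathcal{U}_1$ (for $\zeta_1$ the relative hyperplane class and $\mathcal{U}_1$ the tautological rank-$4$ bundle on $F_1$) with the known canonical class of the $6$-dimensional spinor variety $F_1$ (a homogeneous Fano variety of Picard rank $1$ and Fano index $6$) expresses $-K_G$ as $a H_1 + b H_2$ with $a, b \geq 2$, where $H_i := p_i^\ast \OO_{F_i}(1)$. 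Adjunction then yields $-K_X = ((a-1) H_1 + (b-1) H_2)|_X$, which is a restriction of an ample class, hence ample.

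For the Hodge numbers outside the middle row, the Lefschetz hyperplane theorem (applicable since $\LL$ is very ample and $X$ is smooth) gives $H^i(X, \QQ) \cong H^i(G, \QQ)$ for $i < 8$, and by Poincar\'e duality on $X$ also $H^i(X, \QQ) \cong H^{16 - i}(G, \QQ)$ for $i > 8$. Since $G$ is a rational homogeneous variety, $H^\ast(G, \QQ)$ is generated by Schubert cycles, concentrated in even degrees, and entirely of Hodge type $(p,p)$ in degree $2p$; the Betti numbers of $G$ can be computed either from the Schubert cell decomposition or from the projective bundle formula $H^\ast(G) \cong H^\ast(F_1) \otimes H^\ast(\PP^3)$ together with the known cohomology of $F_1$. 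This settles all rows of the Hodge diamond except the middle one.

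The main task is thus the middle cohomology $H^8(X, \QQ)$. Lefschetz exhibits a sub-Hodge structure $H^8(G, \QQ) \hookrightarrow H^8(X, \QQ)$ with orthogonal complement the primitive cohomology $H^8_{prim}(X, \QQ)$, giving a direct sum decomposition in which the first summand is of pure Hodge type $(4,4)$. The total Betti number $b_8(X)$ follows from the topological Euler characteristic $\chi_{top}(X) = \int_X c_8(T_X)$, which reduces via the normal bundle sequence $0 \to T_X \to T_G|_X \to \LL|_X \to 0$ to a Chern-class computation on $G$. To identify the Hodge structure on the primitive part, one applies Griffiths' description of the Hodge filtration on primitive cohomology of a hypersurface, suitably adapted from projective space to the homogeneous ambient $G$ via a Koszul resolution / ``Jacobi ring'' attached to the defining section $f \in H^0(G, \LL)$, and obtains the vanishings $h^{p,q}_{prim}(X) = 0$ for $(p,q)$ with $p+q = 8$ and $\vert p-q \vert > 2$, together with $h^{5,3}_{prim}(X) = h^{3,5}_{prim}(X) = 1$ (which is precisely the K3-type feature of $X$). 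This Hodge-theoretic analysis of the primitive middle cohomology is the main obstacle; fortunately it has already been carried out in \cite{IMOU} and \cite[Section 3.7]{FM}, so the cleanest route is to invoke their computation directly.
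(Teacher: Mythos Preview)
Your outline is correct and coincides with the paper's approach, which is in fact nothing more than a citation to \cite[Section 3.7]{FM} and \cite[Proposition A.1.1]{Fa}; your sketch (Lefschetz hyperplane for the non-middle rows, the $\PP^3$-bundle description of $H^\ast(\operatorname{OGr}(3,8))$, and a Griffiths/Jacobi-ring analysis of the primitive $H^8$) is precisely what those references carry out. One minor point: the explicit Hodge-number computation lives in Fatighenti's thesis \cite{Fa} rather than in \cite{IMOU}.
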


\begin{proof} This is \cite[Section 3.7]{FM}. The Hodge numbers are computed in \cite[Proposition A.1.1]{Fa}.

\end{proof}

\begin{remark} Varieties of type $S2$ are part of a long list of Fano varieties of K3 type given in \cite{FM}. 
\end{remark}

\begin{proposition}[]\label{assk3} Given $s\in H^0(\operatorname{OGr}(3,8),\LL)$ a general section, let $s_1, s_2$ be the induced sections of $(p_1)_\ast\LL$ resp. $(p_2)_\ast\LL$ under the isomorphism (\ref{iso}). Let $S_i\subset F_i$ denote the zero loci of $s_i$ ($i=1,2$). Then $S_1$ and $S_2$ are 
K3 surfaces of genus $7$. Moreover,

\noindent
(\rom1) $S_1$ and $S_2$ are L-equivalent: one has equality in the Grothendieck ring of varieties
  \[  (S_1-S_2)\cdot \LLL^3=0\ \ \ \hbox{in}\ K_0(\hbox{Var})\ ;\]
  
\noindent
(\rom2) $S_1$ and $S_2$ are derived equivalent (i.e. their derived categories of coherent sheaves are isomorphic);

\noindent
(\rom3) for $s$ very general, $S_1$ and $S_2$ are not isomorphic.
\end{proposition}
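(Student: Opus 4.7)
The plan is to deduce the four assertions from the two $\PP^3$-fibrations $p_i:\operatorname{OGr}(3,8)\to F_i$, largely following the analyses in \cite{IMOU} and \cite{FM}.

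I would begin by identifying the $S_i$ as K3 surfaces of genus $7$. Since $\LL$ restricts to $\OO_{\PP^3}(1)$ on each fiber of $p_i$ (the factor $p_i^\ast \OO_{F_i}(1)$ being trivial and the other factor restricting via the spinor embedding), the pushforward $\EE_i:=(p_i)_\ast \LL$ is a rank $4$ vector bundle on $F_i$ whose global sections coincide with $H^0(\operatorname{OGr}(3,8),\LL)$ via the isomorphism (\ref{iso}). For general $s$, the induced section $s_i$ of $\EE_i$ is regular, so $S_i$ is smooth (Bertini) of the expected codimension $4$ in the spinor sixfold $F_i$. Adjunction gives $K_{S_i}=(K_{F_i}+\det \EE_i)|_{S_i}$, and a direct Chern-class computation (using that $\pic(F_i)=\Z\cdot \OO_{F_i}(1)$ and that the Fano index of $F_i$ is $6$) shows this is trivial; restricting $\OO_{F_i}(1)$ to $S_i$ and computing its self-intersection yields $12$, hence genus $7$.

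For (i), the key is to stratify $X$ via the two projections. Over a point $f\in F_1$, the section $s$ restricts to an element of $H^0(\PP^3_f,\OO(1))$ whose class is $s_1(f)$, so the fiber of $p_1|_X$ is a hyperplane $\PP^2$ when $f\notin S_1$ and the full $\PP^3_f$ when $f\in S_1$. Since $p_1$ is a Zariski-locally trivial $\PP^3$-bundle (coming from a projectivization of the tautological $4$-bundle on $F_1$), this gives
\[ [X]=([F_1]-[S_1])\cdot[\PP^2]+[S_1]\cdot[\PP^3]=[F_1]\cdot[\PP^2]+[S_1]\cdot \LLL^3\quad\hbox{in}\ K_0(\hbox{Var})\ , \]
and the same computation using $p_2$ gives $[X]=[F_2]\cdot[\PP^2]+[S_2]\cdot \LLL^3$. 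Subtracting and using $F_1\cong F_2$ yields the L-equivalence relation.

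For (ii), I would apply Orlov's $\PP^k$-bundle formula to $p_i|_X$ (generically a $\PP^2$-bundle, jumping to $\PP^3$ over $S_i$): this produces two semiorthogonal decompositions of $D^b(X)$, each containing a residual Kuznetsov component equivalent (after a suitable twist) to $D^b(S_i)$. A mutation argument then matches these two components, yielding $D^b(S_1)\simeq D^b(S_2)$; this may alternatively be viewed as an instance of homological projective duality for the spinor embeddings of the $F_i$. For (iii), I would argue via periods: the assignment $s\mapsto (S_1,S_2)$ defines a rational map from $\vert\LL\vert$ to $\MM_{K3,12}\times \MM_{K3,12}$, and it suffices to exhibit a single $s$ for which $S_1\not\cong S_2$, since the locus where they are isomorphic is a countable union of proper subvarieties of Noether--Lefschetz type. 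The main obstacle I expect is (ii), where the mutation/HPD matching is delicate and essentially constitutes the core content of \cite{IMOU}; parts (i) and (iii) should then drop out as short byproducts.
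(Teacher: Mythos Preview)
Your sketch is more detailed than the paper's own proof, which simply cites the literature: parts (\rom1) and (\rom3) are taken from \cite{IMOU}, while (\rom2) is announced in \cite{FM} and proven in \cite{BFM}. Your outline of the K3 identification and of (\rom1) is correct and matches the actual content of \cite{IMOU}; the stratification you describe is exactly Lemma~\ref{lemfib} of the present paper.

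Two points deserve correction. First, you have the attributions inverted: you write that (\rom2) ``essentially constitutes the core content of \cite{IMOU}'' with (\rom1), (\rom3) as byproducts, but in fact \cite{IMOU} establishes (\rom1) and (\rom3), whereas the derived equivalence (\rom2) is proven only later in \cite{BFM}. Second, your approach to (\rom3) via a period map and a single non-isomorphic example is not quite what is done: the argument in \cite[Section~4]{IMOU} is that for $s$ very general the $S_i$ have Picard number~$1$, and then one checks directly that the polarized surfaces are non-isomorphic (one surface is a moduli space of sheaves on the other, and this is not the trivial moduli space). Your proposed argument could in principle be made to work, but it still requires producing at least one explicit pair $S_1\not\cong S_2$, which is the substantive step either way.
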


\begin{proof} Except for (\rom2), this is contained in \cite{IMOU}. The hypothesis that $s$ be {\em very general\/} in (\rom3) is made to ensure that the $S_i$ have Picard number $1$ (cf. \cite[Section 4]{IMOU}).

Statement (\rom2) (which we merely state for illustration, and will not use below) is announced in \cite{FM} and proven in \cite{BFM}.
\end{proof}

\begin{remark} Proposition \ref{assk3} (\rom2) implies in particular, via \cite{Huy}, that $S_1$ and $S_2$ have isomorphic Chow motives. This will be proven directly below (Corollary \ref{K3iso}), without appealing to the derived equivalence.

\end{remark}

\begin{definition} Let $X$ be a variety of type $S2$, and assume that the sections $s_i$ as in Proposition \ref{assk3} define smooth surfaces $S_i$. We say that $S_1$ and $S_2$ are {\em associated\/} to $X$.
\end{definition}

\subsection{The Franchetta property}
\label{ss:gfc}

\begin{definition}[\cite{FLV}] Let $\XX\to B$ be a smooth projective morphism. We say that $\XX\to B$ has {\em the Franchetta property\/} if the following holds: any 
$\Gamma\in A^\ast(\XX)$ which is fibrewise homologically trivial is fibrewise rationally trivial.
\end{definition}

\begin{theorem}\label{gfc} Let
      $B$ be the Zariski open in $\PP H^0(\operatorname{OGr}(3,8),\LL)$ parametrizing smooth dimensionally transverse hypersurfaces, and let $B_0\subset B$ such that for each fibre $X_b=V(s_b)$ with $b\in B_0$ the zero locus $S_b:=V((pr_1)_\ast(s))\subset F_1$ is a smooth K3 surface. Let
      \[ \Ss\ \to\ B_0 \]
      denote the universal family of sections of type $S_b$.
      
   The families 
   \[ \Ss\ \to \ B_0\ ,\ \ \Ss\times_{B_0}\Ss\ \to\ B_0 \]
   have the Franchetta property. 
\end{theorem}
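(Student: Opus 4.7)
The plan is to exhibit the universal family as an incidence variety and exploit a stratified projective bundle structure. Setting $\EE := (p_1)_*\LL$ on $F_1$, the universal section space $\Ss$ embeds as the incidence locus $\{(x,[s]) \in F_1 \times B_0 : s(x) = 0\}$, and (using that $\EE$ is globally generated on the rational homogeneous variety $F_1$) the projection $\Ss \to F_1$ is a Zariski-locally trivial projective bundle. By the same token, $\Ss \times_{B_0} \Ss \to F_1 \times F_1$ is a stratified projective bundle, with only the relative diagonal $\Delta_{F_1}$ as its jump locus.

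Applying the (stratified) projective bundle formula (as in \cite{FLV}), one obtains that $A^*(\Ss)$ is generated as a $\QQ$-algebra by pullbacks from $A^*(F_1)$ together with the relative hyperplane class $h_B$ coming from $B_0$, and that $A^*(\Ss \times_{B_0} \Ss)$ is generated by pullbacks from $A^*(F_1 \times F_1)$, by $h_B$, and by the class of the relative diagonal $\Delta_{\Ss/B_0}$. Since $h_B$ restricts to zero on a single fiber, the Franchetta property reduces to proving that the tautological subrings
\[ R^*(S_b) := \mathrm{Im}\bigl(A^*(F_1) \to A^*(S_b)\bigr), \quad R^*(S_b \times S_b) := \mathrm{Im}\bigl(A^*(F_1 \times F_1) \to A^*(S_b \times S_b)\bigr) + \QQ\cdot \Delta_{S_b} \]
inject into cohomology for every $b \in B_0$.

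The first injection is almost immediate: $F_1$ has Picard rank one, so $R^*(S_b) = \QQ\langle 1, h_{S_b}, h_{S_b}^2\rangle$ with $h_{S_b}$ the restriction of the generator of $\operatorname{Pic}(F_1)$, and these three classes are linearly independent in $H^*(S_b,\QQ)$. The main obstacle is the injection for $S_b \times S_b$: by cellularity of $F_1$ one has $A^*(F_1 \times F_1) = A^*(F_1)^{\otimes 2}$, so $R^*(S_b \times S_b)$ is generated by $p_1^*h_{S_b}$, $p_2^*h_{S_b}$, and $\Delta_{S_b}$, and one must verify that no new cohomological relations appear among the finitely many monomials in these three generators. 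I would check this by a bidegree-by-bidegree dimension count, combining the Beauville--Voisin relation $h_{S_b}^2 = 12\, o_{S_b}$ (with $o_{S_b}$ the canonical zero-cycle of $S_b$) with the standard identities for $\Delta_{S_b}\cdot p_i^*\alpha$ and $\Delta_{S_b}^2 = 24\, p_1^*o_{S_b}\cdot p_2^*o_{S_b}$ on a K3 surface, in the spirit of the Franchetta arguments of \cite{FLV} for Mukai models of K3 surfaces of low genus.
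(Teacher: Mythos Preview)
Your approach is essentially the paper's: both show that the universal family over $F_1$ is a projective bundle and that the fibre square over $F_1\times F_1$ is a stratified projective bundle with jump locus $\Delta_{F_1}$, then reduce the Franchetta property to the injectivity of the tautological subrings $R^\ast(S_b)$ and $R^\ast(S_b\times S_b)$ in cohomology. The paper packages the second step as property $(\ast_2)$ of \cite[Definition~5.6]{FLV} and invokes \cite[Proposition~5.7]{FLV}, and it cites \cite[Proposition~2.2]{V9} for the injectivity of $\langle A^1(S_b),\Delta_{S_b}\rangle$ in lieu of your hands-on Beauville--Voisin check; these are cosmetic differences.

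One step in your outline is underjustified. You assert that the jump locus of $\Ss\times_{B_0}\Ss\to F_1\times F_1$ is exactly the diagonal ``by the same token'' as in the single-factor case, i.e.\ from global generation of $\EE=(p_1)_\ast\LL$. But global generation only gives surjectivity of evaluation at a single point; what you need is surjectivity of
\[ H^0(F_1,\EE)\ \twoheadrightarrow\ \EE_{y_1}\oplus\EE_{y_2}\ \ \ \hbox{for all }y_1\neq y_2\ ,\]
a genuine two-point separation property that is not implied by global generation of a rank-$4$ bundle. The paper verifies this separately, using the explicit product description $\LL=p_1^\ast\OO_{F_1}(1)\otimes p_2^\ast\OO_{F_2}(1)$. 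A smaller technical point: over the open $B_0$ the fibres of $\Ss\to F_1$ are only Zariski-open in projective spaces, not projective spaces; the projective bundle statement holds for the compactified family over $\bar B:=\PP H^0(\operatorname{OGr}(3,8),\LL)$, which is enough since $A^\ast(\bar\Ss)\to A^\ast(\Ss)$ is surjective.
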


\begin{proof} This is not a surprising result: indeed, $\Ss\to B_0$ contains the general K3 surface of genus $7$ (i.e. there is a dominant morphism $B_0\to \FF_7$ to the moduli stack of genus $7$ K3 surfaces), and the Franchetta property for $\Ss\to\FF_7$ and for $\Ss\times_{\FF_7}\Ss$ were already
proven in \cite{PSY} resp. in \cite{FLV}. However, in \cite{PSY}, \cite{FLV} the standard Mukai parametrization of genus $7$ K3 surfaces was used, which is different from the parametrization $B_0$ used here, and so we need to do some extra work to prove Theorem \ref{gfc}.

The argument for $\Ss\to B_0$ is similar to that of \cite{PSY}. We use the following:

\begin{lemma}[\cite{IMOU}]\label{lemfib} Let $\pi\colon X\to F_1$ denote the restriction of $p_1\colon \operatorname{OGr}(3,8)\to F_1$ to $X$.
 The morphism $\pi\colon X\to F_1$ is a (Zariski locally trivial) $\PP^3$-bundle over $S_1\subset F_1$, and a (Zariski locally trivial) $\PP^2$-bundle over $U:= F_1\setminus S_1$.
 \end{lemma}
 
 \begin{proof} This is \cite[Lemma 2.1]{IMOU}.
 \end{proof} 

Let 
  \[\bar{B}:=\PP H^0(\operatorname{OGr}(3,8),\LL)\ ,\] 
  and let us consider the universal family 
  \[\bar{\Ss}\ \to\ \bar{B}\]
 of possibly singular sections. There is an inclusion as a Zariski open $B_0\subset\bar{B}$. It follows from Lemma \ref{lemfib} that $\bar{\Ss}$
is a $\PP^r$-fibration over $F_1$ (indeed, given a point $y\in F_1$ let $O_y:=(p_1)^{-1}\cong\PP^3\subset  \operatorname{OGr}(3,8)$ denote the fibre over $y$. Since $\LL$ is base-point free, there exists a section of $\LL$ not containing the whole fibre $O_y$, hence there is a surface $S_b\subset F_1$ avoiding the point $y$: every point $y\in Y$ imposes one condition on $\bar{B}$).

Reasoning as in \cite[Lemma 2.1]{PSY}, this implies that 
  \[ \ima\Bigl(  A^\ast(\bar{\Ss})\to A^\ast(S_b)\Bigr) = \ima\Bigl( A^\ast(F_1)\to A^\ast(S_b)\Bigr)\ \ \ \forall b\in B_0\ .\]
  Since $A^2(F_1)\cong\QQ$ is generated by intersections of divisors, this settles the Franchetta property for $\Ss\to B_0$.

Next, we claim that the family $\Ss\to B_0$ verifies property $(\ast_2)$ of \cite[Definition 5.6]{FLV}. This claim, combined with \cite[Proposition 5.7]{FLV} and the Franchetta property for $\Ss\to B_0$ implies that
\[ \ima\Bigl(  A^\ast(\bar{\Ss}\times_{\bar{B}}\bar{\Ss})\to A^\ast(S_b\times S_b)\Bigr) =  \langle A^1(S_b),\Delta_{S_b}\rangle   \ \ \ \forall b\in B_0\ .\]
The right-hand side is known to inject into cohomology \cite[Proposition 2.2]{V9}, and so we are done.

To prove the claim, we reason as above: given two different points $y_1, y_2\in F_1$, let $O_{y_1}, O_{y_2}\subset  \operatorname{OGr}(3,8) $ denote the fibres. Given the definition of $\LL$, one readily finds that restriction induces a surjection
 \[ H^0( \operatorname{OGr}(3,8), \LL)\ \twoheadrightarrow\ H^0(O_{y_1},\LL\vert_{O_{y_1}})\oplus H^0(O_{y_2},\LL\vert_{O_{y_2}})\ ,\]
  i.e. two different points $y_1,y_2
\in F_1$ impose 2 independent conditions on $\bar{B}$. This proves the claim.
\end{proof}

\section{An isomorphism of motives}

\begin{theorem}\label{main0} Let $X\subset\operatorname{OGr}(3,8)$ be a variety of type $S2$, and assume that $X$ has an associated K3 surface $S$. There is an isomorphism of motives
  \[  h(X)\cong h(S)(-3)\oplus \bigoplus \one(\ast)\ \ \ \hbox{in}\ \MM_{\rm rat}\ .\]
  (In particular, one has $A^i_{hom}(X)=0$ for all $i\not=5$.)
\end{theorem}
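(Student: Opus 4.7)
The plan exploits the projection $\pi := p_1|_X \colon X \to F_1$, which by Lemma \ref{lemfib} is a Zariski-locally trivial $\PP^3$-bundle over $S \subset F_1$ and a Zariski-locally trivial $\PP^2$-bundle over $U := F_1 \setminus S$. Crucially $F_1$, being isomorphic (via $D_4$-triality) to a six-dimensional smooth quadric, has a Chow motive which is a sum of Tate motives; combined with the standard localization sequence this easily yields $A^*_{hom}(U)=0$. Heuristically,
\[ [X] = [F_1](1+\LLL+\LLL^2) + [S]\LLL^3 \]
in the Grothendieck ring of varieties, suggesting that $h(X)$ should decompose as Tate summands plus a single copy of $h(S)(-3)$.

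\emph{Step 1 (constructing the non-Tate summand).} Let $E := \pi^{-1}(S)$, with inclusion $i\colon E\hookrightarrow X$ and projection $\rho := \pi|_E\colon E\to S$ (a $\PP^3$-bundle). Define
\[ \Gamma := (i,\rho)_*[E] \in A^5(X\times S), \]
giving a morphism $\Gamma\colon h(X)\to h(S)(-3)$; let $\Gamma^t \in A^5(S\times X)$ be its transpose, giving $\Gamma^t\colon h(S)(-3)\to h(X)$. By excess intersection with excess bundle $N_{E/X}$ one has
\[ \Gamma \circ \Gamma^t = \rho_*c_3(N_{E/X})\cdot\Delta_S. \]
Since $S$ is the zero locus of a section of the rank-$4$ bundle $E_1 := (p_1)_*\LL$ and $X$ is the zero locus of a section of $\LL$, the comparison of normal bundles for $E\subset X\subset\operatorname{OGr}(3,8)$ gives the short exact sequence
\[ 0 \to N_{E/X} \to \rho^* E_1|_S \to \LL|_E \to 0. \]
Since $\dim S=2$, the classes $c_j(E_1|_S)$ vanish for $j>2$; setting $h := c_1(\OO_E(1))$ and using $\rho_*h^j=0$ for $j<3$ together with $\rho_*h^3=[S]$, a direct Chern-class computation yields $\rho_*c_3(N_{E/X}) = -[S]$. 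Hence $\Gamma\circ\Gamma^t = -\Delta_S$, so the idempotent $p := -\Gamma^t\circ\Gamma \in A^8(X\times X)$ exhibits $h(S)(-3)$ as a direct summand of $h(X)$; let $Q$ denote the complement.

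\emph{Step 2 (the complement is Tate).} Cohomologically, $Q$ is pure Hodge--Tate, since the only K3-type transcendental Hodge substructure of $H^*(X)$ is $H^2(S)(-3)\subset H^8(X)$. To upgrade this to the motivic level it suffices to show $A^i_{hom}(X)=0$ for $i\neq 5$. Combining $A^*_{hom}(U)=0$ with the localization
\[ A^{*-3}(E)\xrightarrow{i_*}A^*(X)\to A^*(\pi^{-1}(U))\to 0 \]
and the projective bundle formulas on both pieces, the problem reduces to showing that for $\alpha\in A^2_{hom}(S)$ and $k\in\{1,2,3\}$,
\[ H_X^k \cdot i_* \rho^* \alpha \;=\; 0 \quad \text{in } A^{5+k}(X), \]
where $H_X := c_1(\OO_{\operatorname{OGr}(3,8)}(1))|_X$. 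The case $k=3$ is automatic, since $X$ is Fano and hence $A^8_{hom}(X)=0$.

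\emph{Main obstacle.} The cases $k=1,2$ are the heart of the matter: these hard-Lefschetz-type vanishings are visible cohomologically (as $H^{12}(X)$ and $H^{14}(X)$ are pure Hodge--Tate) but must be lifted to the Chow level. The key tool here is the Franchetta property (Theorem \ref{gfc}) for the families $\Ss\to B_0$ and $\Ss\times_{B_0}\Ss\to B_0$: cycles on these families that are fibrewise homologically trivial are fibrewise rationally trivial, so the universal cohomological relations lift to Chow, and specialising via the correspondence $\Gamma$ then yields the required vanishings on $X$ for general, and hence for all, $X$ admitting an associated $S$.
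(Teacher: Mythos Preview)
Your Step~1 is fine and the heuristic is correct, but Step~2 contains a genuine gap. The Franchetta property of Theorem~\ref{gfc} controls only \emph{universally defined} cycles on the families $\Ss$ and $\Ss\times_{B_0}\Ss$. The vanishing you need, however, is $i_*(\rho^*\alpha\cdot h^k)=0$ in $A^{5+k}(X_b)$ for an \emph{arbitrary} class $\alpha\in A^2_{hom}(S_b)$; such an $\alpha$ does not come from $A^*(\Ss)$, so Theorem~\ref{gfc} says nothing about it. Attempting to transport the problem back to $S_b$ or $S_b\times S_b$ via your correspondence $\Gamma$ fails for degree reasons: any composite $\Gamma\circ D_k$ (with $D_k$ the correspondence $\alpha\mapsto i_*(\rho^*\alpha\cdot h^k)$) lands in $A^{>2}(S_b)=0$, hence gives no information. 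In short, Franchetta for the K3 family cannot substitute for a Franchetta-type statement for $\XX$ or $\Ss\times_{B_0}\XX$, and in the paper's logic the latter is in fact \emph{deduced from} Theorem~\ref{main0} (see the proof of Theorem~\ref{main}), so invoking it here would be circular. There is also a secondary issue: even granting $A^i_{hom}(X)=0$ for $i\neq 5$ over~$\C$, concluding that the complementary motive $Q$ is a sum of Lefschetz motives in $\MM_{\rm rat}$ requires this vanishing universally (over all base-field extensions), which you have not addressed.

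The paper bypasses all of this by observing that the situation of Lemma~\ref{lemfib} is precisely a degenerate projectivization, so Jiang's formula \cite[Corollary~3.2]{Ji} (or, alternatively, the localization triangle in Voevodsky's category as in \cite[Proof of Theorem~2.1]{S6}) directly yields
\[
  h(X)\ \cong\ \bigoplus_{j=0}^{2} h(F_1)(-j)\ \oplus\ h(S)(-3)\ \ \ \hbox{in}\ \MM_{\rm rat}\, ,
\]
and one concludes since $F_1$ is a six-dimensional quadric. This gives the full motivic decomposition in one stroke, with no need for Franchetta and no restriction to general $X$.
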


\begin{proof} Without loss of generality we may assume $S=S_1$ with respect to the notation introduced above.
Let $\pi\colon X\to F_1$ and $U:= F_1\setminus S_1$ be as in Lemma \ref{lemfib}, i.e. $\pi$ is generically a $\PP^2$-fibration but degenerates to a $\PP^3$-fibration over $S_1$.
Then one has an isomorphism of motives
  \begin{equation}\label{ji}
    h(X)\cong  \bigoplus_{j=0}^2 h(F_1)(-j)\oplus h(S)(-3)\ \ \ \hbox{in}\ \MM_{\rm rat}\ .\end{equation}
  Since $F_1\cong\operatorname{Spin}(8)$ is a $6$-dimensional quadric (cf. \cite[Remark 2.4]{IMOU}), the motive $h(F_1)$ is isomorphic to a sum of twisted Lefschetz motives $\oplus \one(\ast)$, and so the theorem follows from \eqref{ji}.
    
 The isomorphism \eqref{ji} can be proven using Voevodsky motives (as I did in \cite[Proof of Theorem 2.1, equation (4)]{S6}, where the situation is completely similar), but also follows directly by applying \cite[Corollary 3.2]{Ji}.

    \end{proof}

\begin{corollary}\label{K3iso} Let $X$ be a variety of type $S2$, and assume $X$ has associated K3 surfaces $S_1, S_2$. Then
  \[ h(S_1)\ \cong\ h(S_2)\ \ \ \hbox{in}\ \MM_{\rm rat}\ .\]
  \end{corollary}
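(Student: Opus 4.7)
The plan is to apply Theorem \ref{main0} twice, once for each of the two associated K3 surfaces $S_1, S_2$. Since the proof of Theorem \ref{main0} is symmetric in the two projections $p_1, p_2\colon\operatorname{OGr}(3,8)\to F_i$, it yields the two isomorphisms
$$h(X)\ \cong\ \bigoplus_{j=0}^{2} h(F_1)(-j)\oplus h(S_1)(-3)\ \cong\ \bigoplus_{j=0}^{2} h(F_2)(-j)\oplus h(S_2)(-3)\ \ \ \hbox{in}\ \MM_{\rm rat}.$$
Since $F_1\cong F_2$ (Notation \ref{not}), both Lefschetz summands are isomorphic to the same motive $L := \bigoplus_{j=0}^{2} h(F_1)(-j)$. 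Composing the two displayed isomorphisms yields
$$h(S_1)(-3)\oplus L\ \cong\ h(S_2)(-3)\oplus L\ \ \ \hbox{in}\ \MM_{\rm rat},$$
so the corollary reduces to cancelling $L$ and the Tate twist (the latter being removable for free, as $-\otimes\one(3)$ is an auto-equivalence of $\MM_{\rm rat}$).

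To carry out the cancellation, I would invoke the finer decomposition of the Chow motive of a K3 surface,
$$h(S_i)\ \cong\ \one\oplus\one(-1)^{\rho_i}\oplus t(S_i)\oplus\one(-2),$$
where $\rho_i$ is the Picard number of $S_i$ and $t(S_i)$ denotes the transcendental motive. A Betti realization of the isomorphism above immediately forces $\rho_1=\rho_2$, so the algebraic parts of $h(S_1)$ and $h(S_2)$ are already isomorphic, and the statement reduces to establishing $t(S_1)\cong t(S_2)$. For this, I would exploit the fact that $t(S_i)$ admits no Lefschetz direct summand, and argue via the block-matrix form of the isomorphism with respect to these finer decompositions that the ``transcendental block'' must descend to a direct isomorphism $t(S_1)(-3)\to t(S_2)(-3)$.

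The main obstacle is precisely this last step, since Krull--Schmidt is known to fail in $\MM_{\rm rat}$, so the uniqueness (up to isomorphism) of the ``transcendental summand'' of $h(X)$ is not automatic. An alternative and perhaps more geometric route, in the spirit of the proof of Theorem \ref{main0}, would be to construct the isomorphism $h(S_1)\cong h(S_2)$ directly from a correspondence on $S_1\times S_2$ cut out by the incidence diagram $F_1\leftarrow\operatorname{OGr}(3,8)\to F_2$ (after restriction to $X$ and correction by the divisors over $S_1,S_2$), thereby bypassing the abstract cancellation entirely.
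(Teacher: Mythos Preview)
Your outline matches the paper's: apply Theorem \ref{main0} for each projection, cancel a common sum of Tate twists, and reduce to $t(S_1)\cong t(S_2)$. The point where you stop short is the cancellation, and you are right that Krull--Schmidt fails in $\MM_{\rm rat}$ in general. But in this particular situation there is no obstacle, and your block--matrix idea actually goes through once you use one fact you did not invoke: for the transcendental motive of a surface one has $A^i(t(S))=0$ for $i\ne 2$ (this is built into the Kahn--Murre--Pedrini construction \cite{KMP}). Concretely, write the isomorphism $t(S_1)\oplus M\xrightarrow{\ \Phi\ } t(S_2)\oplus M$ (with $M=\bigoplus_k\one(-j_k)$) and its inverse in block form
\[
\Phi=\begin{pmatrix}\alpha&\beta\\ \gamma&\delta\end{pmatrix},\qquad
\Phi^{-1}=\begin{pmatrix}\alpha'&\beta'\\ \gamma'&\delta'\end{pmatrix}.
\]
The $k$-th component of $\gamma$ lies in $\operatorname{Hom}(t(S_1),\one(-j_k))\cong A^{2-j_k}(t(S_1))$, which vanishes unless $j_k=0$; the $k$-th component of $\beta'$ lies in $\operatorname{Hom}(\one(-j_k),t(S_1))\cong A^{j_k}(t(S_1))$, which vanishes unless $j_k=2$. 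Hence every summand $\beta'_k\gamma_k$ is zero, so $\beta'\gamma=0$ and $\alpha'\alpha=\operatorname{id}_{t(S_1)}$; symmetrically $\alpha\alpha'=\operatorname{id}_{t(S_2)}$. Thus $\alpha$ is the desired isomorphism, and no abstract cancellation principle is needed. (Your Betti argument for $\rho_1=\rho_2$ is fine, and then $h(S_1)\cong h(S_2)$ follows.)

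The paper arrives at the same conclusion by a closely related but differently packaged route. Rather than analysing the block matrix, it simply applies $A^\ast_{hom}$ to the isomorphism $t(S_1)\oplus M\cong t(S_2)\oplus M$: since $A^\ast_{hom}(\one(-j))=0$, this yields a \emph{correspondence-induced} isomorphism $A^2_{hom}(S_1)\cong A^2_{hom}(S_2)$, and then the Bloch--Srinivas decomposition-of-the-diagonal argument (as formulated in \cite[Lemma 1.1]{Huy}) upgrades this to $t(S_1)\cong t(S_2)$. Both arguments ultimately rest on the same fact that $t(S)$ has Chow groups only in degree $2$; your direct matrix computation is arguably more elementary, while the paper's version has the advantage of quoting a standard lemma. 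Your proposed ``alternative geometric route'' via an explicit incidence correspondence is unnecessary here.
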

  
  \begin{proof} Theorem \ref{main0} implies that there are isomorphisms of motives
   \[   \Gamma_i\colon\ \ h(X)\ \xrightarrow{\cong}\ t(S_i)(-3)\oplus \bigoplus \one(\ast)\ \ \ \hbox{in}\ \MM_{\rm rat} \ \ \ \ (i=1,2)\ ,\]   
%   Consider the composition
%   \[ \Psi_{12}\colon\ \  t(S_1)(3)\ \xrightarrow{}\  t(S_1)(3)\oplus \bigoplus \one(\ast)\ \xrightarrow{\Gamma_1^{-1}}\ h(X)\ \xrightarrow{\Gamma_2}\ t(S_2)(3)\oplus \bigoplus \one(\ast)  \ \xrightarrow{}\ t(S_2)(3)\ \]
%   (where the first and last arrow are defined by inclusion resp. projection),
%   and the reverse composition
%   \[ \Psi_{21}\colon\ \ t(S_2)(3)\ \to\ t(S_1)(3)\ \]
%   (obtained by exchanging $S_1$ and $S_2$).
and so one gets an isomorphism
 \[ t(S_1)\oplus \bigoplus \one(\ast) \cong t(S_2)\oplus \bigoplus \one(\ast)\ \ \ \hbox{in}\ \MM_{\rm rat} \ .\]
% This means that one can write
%  \begin{equation}\label{this} t(S_1)\cong  M_{tr}\oplus M_{alg}\ ,\end{equation}
%  with $M_{tr}$ being a submotive of $t(S_2)$ and $M_{alg}$ a submotive of $\bigoplus \one(\ast)$, hence $M_{alg}$ is of the form $\bigoplus \one(\ast)$.
%  Taking cohomology on both sides of \eqref{this}, one finds that $M_{alg}=0$.
Taking Chow groups, this implies that there is a correspondence-induced isomorphism
  \[  A^\ast(t(S_1))=A^2_{hom}(S_1)\cong A^2_{hom}(S_2)=A^\ast(t(S_2))\ .\]
The Bloch--Srinivas argument (cf. for instance \cite[Lemma 1.1]{Huy}) then implies 
 that $t(S_1)\cong t(S_2)$. Since both $S_i$ are K3 surfaces and so have the same Betti numbers, one can conclude that $h(S_1)\cong h(S_2)$.
        \end{proof}

\section{MCK for varieties of type $S2$}

\begin{theorem}\label{main} Let $X$ be a variety of type $S2$. Then $X$ has a multiplicative Chow--K\"unneth decomposition.
  
  The Chern classes $c_j(T_X)$, as well as cycles in the image of the restriction $A^\ast(\operatorname{OGr}(3,8))\to A^\ast(X)$,
  are in $A^\ast_{(0)}(X)$.
    \end{theorem}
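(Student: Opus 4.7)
The plan is to transfer the Beauville--Voisin MCK of the associated K3 surface to $X$ via the motive isomorphism of Theorem \ref{main0}, and to verify multiplicativity by a spread argument that reduces the key identity to the Franchetta property of Theorem \ref{gfc}. I first treat a general $X$ of type $S2$ admitting an associated K3 surface $S=S_1$; the case of an arbitrary variety of type $S2$ will then follow by a standard specialization argument.

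Refining Theorem \ref{main0} using the Beauville--Voisin decomposition $h(S)=\one\oplus h^2_{\rm alg}(S)\oplus t(S)\oplus\one(-2)$ (with $h^2_{\rm alg}(S)$ a sum of copies of $\one(-1)$), I obtain
   \[ h(X)\ \cong\ t(S)(-3)\oplus\bigoplus\one(\ast)\ \ \ \hbox{in}\ \MM_{\rm rat}\ .\]
Combining the Beauville--Voisin projector onto $t(S)$ with the canonical projectors onto the Tate summands yields CK projectors $\{\Pi^i_X\}$ for $X$. Because Theorem \ref{main0} also gives $A^i_{hom}(X)=0$ for $i\not=5$, the induced bigrading satisfies $A^i(X)=A^i_{(0)}(X)$ for $i\not=5$, together with a splitting
   \[ A^5(X)=A^5_{(0)}(X)\oplus A^5_{(2)}(X),\quad A^5_{(2)}(X)=A^5_{hom}(X)\cong A^2_{hom}(S)\ .\]

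Next, I verify the Chow-group form of multiplicativity. Among the required inclusions $A^i_{(j)}(X)\cdot A^{i'}_{(j')}(X)\subset A^{i+i'}_{(j+j')}(X)$, those involving $A^5_{hom}$ on either side are forced by $A^\ast_{hom}(X)=0$ outside codimension $5$; what remains is
   \[ A^i_{(0)}(X)\cdot A^{5-i}_{(0)}(X)\ \subset\ A^5_{(0)}(X),\quad i=1,2,3,4\ ,\]
i.e., the product of algebraic cycles in $A^5(X)$ must have vanishing $A^5_{hom}$-component. Given $\alpha\in A^i(X)$, $\beta\in A^{5-i}(X)$ constructed from divisors, Chern classes of $T_X$, or restrictions from $\operatorname{OGr}(3,8)$, let $\gamma\in A^5_{hom}(X)$ denote the projection of $\alpha\cdot\beta$ to the $t(S)(-3)$-summand. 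The correspondences realizing the isomorphism of Theorem \ref{main0} (built from the relative $\PP^r$-fibration $\pi\colon X\to F_1$ of Lemma \ref{lemfib}), the Beauville--Voisin projector on $S$, and the cycles $\alpha,\beta$ themselves all extend universally over the parameter space $B_0$. Transporting $\gamma$ through the motivic correspondence produces a universally defined, fibrewise homologically trivial cycle on $\Ss\to B_0$, which by Theorem \ref{gfc} is fibrewise rationally trivial, giving $\gamma=0$ as required.

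For the assertion on $c_j(T_X)$ and the image of $A^\ast(\operatorname{OGr}(3,8))\to A^\ast(X)$, the same spread argument applied to each such class individually shows its $t(S)(-3)$-component is a universal, fibrewise homologically trivial cycle on $\XX\to B_0$, hence vanishes, placing the class in $A^\ast_{(0)}(X)$. The hard part will be upgrading the Chow-group reformulation of multiplicativity above to the correspondence-level identity $\Pi^k_X\circ\Delta^{sm}_X\circ(\Pi^i_X\times\Pi^j_X)=0$ in $A^{16}(X\times X\times X)$ that the definition of MCK literally requires: this demands a Franchetta-type statement on $\XX\times_{B_0}\XX\times_{B_0}\XX$ beyond Theorem \ref{gfc}, presumably obtained by leveraging Franchetta on $\Ss$ and $\Ss\times_{B_0}\Ss$ through the universal motive decomposition $h(\XX/B_0)\cong h(\Ss/B_0)(-3)\oplus\bigoplus\one(\ast)$.
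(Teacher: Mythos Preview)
Your overall architecture---transport the problem to the K3 side via the universal motive isomorphism and invoke the Franchetta property---is exactly right, and you correctly flag that the Chow-group-level multiplicativity you verify does \emph{not} by itself give the correspondence-level identity $\Pi^k_X\circ\Delta^{sm}_X\circ(\Pi^i_X\times\Pi^j_X)=0$ in $A^{16}(X^3)$. The paper skips your Chow-group detour entirely and attacks the correspondence identity directly; the one ingredient you are missing is a dimension count, not a stronger Franchetta statement.

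Here is the point. Construct the CK projectors $\pi^i_\XX$ universally (for $i\neq 8$ they are built from cycles on $\operatorname{OGr}(3,8)$ via weak Lefschetz, and $\pi^8_\XX$ is the complement), and set
\[
\Phi_{ijk}=\pi^k_\XX\circ\Delta^{sm}_\XX\circ(\pi^i_\XX\times\pi^j_\XX)\in A^{16}(\XX\times_B\XX\times_B\XX),\qquad i+j\neq k.
\]
Using the universal isomorphism $\Gamma\colon h(\XX/B_0)\cong h(\Ss/B_0)(-3)\oplus\bigoplus\one(\ast)$, push forward by $(\Gamma,\Gamma,\Gamma)_\ast$. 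The target decomposes as
\[
A^{7}(\Ss\times_{B_0}\Ss\times_{B_0}\Ss)\ \oplus\ A^\ast(\Ss\times_{B_0}\Ss)\ \oplus\ A^\ast(\Ss)\ \oplus\ A^\ast(B_0),
\]
where the codimension drops by $3$ on each $\Ss$-factor. The last three summands are handled fibrewise by Theorem \ref{gfc} (Franchetta for $\Ss$ and $\Ss\times_{B_0}\Ss$). For the first summand no Franchetta statement on $\Ss^{\times 3}$ is needed at all: the fibre $(S_b)^3$ has dimension $6$, so $A^7((S_b)^3)=0$ identically. Since $(\Gamma\vert_b)^{\times 3}_\ast$ is injective, this forces $\Phi_{ijk}\vert_b=0$ for every $b\in B_0$, which is precisely MCK.

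Two further remarks. First, your intermediate Chow-group check is both unnecessary and, as written, incomplete: you only treat products of classes ``constructed from divisors, Chern classes, or restrictions from $\operatorname{OGr}(3,8)$'', whereas $A^i_{(0)}(X)\cdot A^{5-i}_{(0)}(X)\subset A^5_{(0)}(X)$ must hold for \emph{all} classes (e.g.\ for $i=6,7,8$ it is not a priori clear that $A^i(X)$ is generated in this way). Second, for the ``second part'' of the theorem the paper does exactly what you propose: the same transport-and-Franchetta argument shows $\XX\to B$ itself has the Franchetta property, whence any universally defined class (Chern classes of $T_{\XX/B}$, restrictions from $\operatorname{OGr}(3,8)$) lies in $A^\ast_{(0)}(X_b)$.
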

 
 \begin{proof} Let
     \[ \XX\ \to\ B\  \]
     denote the universal family of smooth hyperplane sections of $ \operatorname{OGr}(3,8)$ (i.e. $B$ is a Zariski open in the space of sections $\PP H^0(\operatorname{OGr}(3,8),\LL)$, where notation is as above). Let $B_0\subset B$ and $ \Ss\to B_0$ be as in subsection \ref{ss:gfc}.
     In view of a standard spread argument (cf. \cite[Lemma 3.2]{Vo}), it suffices to prove the theorem for $X_b$ with $b\in B_0$.
     
     Theorem \ref{main0} gives us an isomorphism
   \begin{equation}\label{isomot} h(X_b)\cong h(S_b)(-3)\oplus \bigoplus \one(\ast)\ \ \ \hbox{in}\ \MM_{\rm rat}\ ,\end{equation}
   for each $b\in B_0$. Even better, this isomorphism exists {\em universally\/}. Indeed, the maps
   \[ \Psi_b\colon\ \ h(S_b)(-3)\ \to\ h(X_b)  \]
   that enter into the isomorphism (\ref{isomot}) clearly exist universally (they are defined in terms of pullback and pushforward). A Hilbert schemes argument as in \cite[Proposition 3.7]{V0} (cf. also \cite[Proposition 2.11]{Lat} for the precise form used here) then implies that the $\Gamma_b$ also exist universally, i.e. there exists $\Gamma\in A^{5}(\XX\times_{B_0}\Ss)\oplus A^\ast(\XX)$ inducing fibrewise isomorphisms
   \begin{equation}\label{iso1} \Gamma\vert_b\colon\ \  h(X_b)\cong h(S_b)(-3)\oplus \bigoplus \one(\ast)\ \ \ \hbox{in}\ \MM_{\rm rat}\ ,\end{equation}
   for each $b\in B_0$.
   
   One can readily construct a universal CK decomposition for $X$, i.e. there exist cycles $\pi^i_{\XX}\in A^8(\XX\times_B \XX)$ such that the restriction
     \[ \pi^i_{X_b}:=  \pi^i_{\XX}\vert_b\ \ \ \in A^8(X_b\times X_b) \]
     defines a CK decomposition for $X_b$ for each $b\in B$. (This is a standard construction, cf. for instance \cite[Lemma 3.6]{V0}. In brief, one observes that for any $i<8$, $H^i(X_b)\cong H^i(\operatorname{OGr}(3,8))$ is algebraic, and so
     $\pi^i_{X_b}$ is of the form $\sum_k a_{ik}^\vee\times a_{ik}$, where the $a_{ik}$ are a basis for $H^i(\operatorname{OGr}(3,8))$ and the $a_{ik}^\vee$ form a dual basis for $H^{16-i}(X_b)\cong H^{18-i}(\operatorname{OGr}(3,8))$. One then defines $\pi^i_\XX$, $i< 8$ by restricting appropriate cycles in $ \operatorname{OGr}(3,8)\times  \operatorname{OGr}(3,8)\times B$. The cycles $\pi^i_\XX$, $i>8$ are defined as the transpose of $\pi_\XX^{16-i}$, and the
     remaining cycle $\pi^8_\XX$ is defined as the difference $\Delta_\XX-\sum_{j\not=8} \pi^j_{\XX}$.)
     
     It remains to check that this is an MCK decomposition, i.e. one needs to check that the relative correspondence
     \[  \Phi_{ijk}:=  \pi^k_\XX\circ \Delta^{sm}_\XX\circ \bigl((pr_{13})^\ast(\pi^i_\XX)\cdot (pr_{24})^\ast(\pi^j_\XX)\bigr)\ \ \ \in A^{16}(\XX\times_B \XX\times_B \XX)\ ,\ \ \ i+j\not=k \]
     is fibrewise equal to $0$. (Here $\Delta^{sm}_\XX\in A^{16}(\XX\times_B \XX\times_B \XX)$ denotes the relative small diagonal.)    
%     By a standard spread argument \cite[Lemma 3.2]{Vo}, it suffices to prove that the fibrewise restriction $\Phi_{ijk}\vert_b$ is $0$ for all $b\in B_0$. 
The assumption $i+j\not= k$ implies that $\Phi_{ijk}$ is fibrewise homologically trivial. Thus, the image
     \[ (\Gamma,\Gamma,\Gamma)_\ast (\Phi_{ijk})\ \ \in A^7(\Ss\times_{B_0}\Ss\times_{B_0}\Ss)\oplus A^\ast(\Ss\times_{B_0}\Ss)\oplus A^\ast(\Ss)\oplus A^\ast(B_0)\]
     is also fibrewise homologically trivial. But then the Franchetta property for $\Ss\times_{B_0}\Ss$ and for $\Ss$ (Theorem \ref{gfc}), plus the fact that $A^7((S_b)^3)=0$ for dimension reasons, implies that
      \[      (\Gamma\vert_b,\Gamma\vert_b,\Gamma\vert_b)_\ast (\Phi_{ijk}\vert_b) =    (\Gamma,\Gamma,\Gamma)_\ast (\Phi_{ijk})\vert_b =0\ \ \ \hbox{for\ all\ }b\in B_0\ .\]
    Since $ (\Gamma\vert_b,\Gamma\vert_b,\Gamma\vert_b)_\ast $ is injective (cf. isomorphism (\ref{iso1})), this proves that 
     \[ \Phi_{ijk}\vert_b=0\ \ \ \hbox{in}\  A^{16}(\XX\times_B \XX\times_B \XX)\ ,\ \  \ \hbox{for\ all\ }b\in B_0\ \hbox{and}\  i+j\not=k\ . \]      
    This proves the first part of the theorem.
    
     For the second part, one observes that the above argument (passing from the family $\XX$ to the family $\Ss$) shows that the family $\XX\to B$ (and also the family $\XX\times_B \XX\to B$) has the Franchetta property (that is, any $\Gamma\in A^\ast(\XX)$ that is fibrewise homologically trivial is fibrewise zero). 
    It follows that
     \begin{equation}\label{new}  \ima\bigl(  A^j(\XX)\to A^j(X_b)\bigr)\ \subset\ A^j_{(0)}(X)\ ,\end{equation}
     as one sees by applying the Franchetta property to
     \[    (\pi^i_{X_b})_\ast (a\vert_{X_b})=     \bigl( (\pi^i_\XX)_\ast (a)\bigr)\vert_{X_b}\ \ \ , i\not=2j\ ,\]
     where $a\in A^j(\XX)$.
     The inclusion \eqref{new} applies to the Chern classes $c_j(T_{X_b})=c_j(T_{\XX/B})\vert_{X_b}$ and also to cycles in $\ima \bigl(  A^\ast(\operatorname{OGr}(3,8))\to A^\ast(X_b)\bigr)$. This proves the second part of the theorem.     
%     The statement about the Chern classes is non-trivial only for $c_5$, in view of Theorem \ref{main0}. To prove the statement for $c_5$, one considers the relative cycle
%     \[ (\pi^i_\XX)_\ast c_5(T_{\XX/B})\ \ \ \in \ A^{10}(\XX)\ ,\ \ \ i\not=10\ .\]
%     As $i\not=10$, this is fibrewise homologically trivial. By the Franchetta property for $\XX$, it is fibrewise rationally trivial, i.e.
%     \[     (\pi^i_{X_b})_\ast c_5(T_{X_b})=   (\pi^i_\XX)_\ast c_5(T_{\XX/B})\vert_b =0\ \ \ \hbox{in} \ A^{10}(X_b)\ ,\ \ \ \hbox{for\ all\ }b\in B\ ,\ i\not=10\ .\]     
%     This shows that $c_5(T_{X_b})=(\pi^{10}_{X_b})_\ast c_5(T_{X_b})$, i.e. $c_5(T_{X_b})\in A^5_{(0)}(X_b)$. The statement about 
%     $\ima \bigl(  A^\ast(\operatorname{OGr}(3,8))\to A^\ast(X)\bigr)  $ is proven similarly.      
      \end{proof}

 \begin{corollary}\label{cor} Let $X$ be an eightfold as in Theorem \ref{main}. Let $R^5(X)\subset A^5(X)$ denote the subgroup
   \[ R^5(X):= \langle A^1(X)\cdot A^4(X), \ A^2(X)\cdot A^3(X) ,\ c_5(T_X),\ \ima\bigl( A^\ast(\operatorname{OGr}(3,8))\to A^\ast(X)\bigr) \rangle\ .\]
   Then the cycle class map induces an injection
   \[ R^5(X)\ \hookrightarrow\ H^{10}(X,\QQ)\cong \QQ^4\ .\]
   \end{corollary}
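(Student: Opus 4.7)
The plan is to exploit the MCK bigrading from Theorem \ref{main} together with the motivic decomposition of Theorem \ref{main0} in order to show $R^5(X)\subset A^5_{(0)}(X)$ and that $A^5_{(0)}(X)$ injects into $H^{10}(X,\QQ)$.

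First I would place all generators of $R^5(X)$ in $A^5_{(0)}(X)$. The second assertion of Theorem \ref{main} directly places $c_5(T_X)$ and the restrictions of classes from $\operatorname{OGr}(3,8)$ in $A^\ast_{(0)}(X)$. For the two product generators, I would invoke Theorem \ref{main0} to deduce $A^i_{hom}(X)=0$ for $i\ne 5$, combine this with the standard observation that $A^i_{(j)}(X)\subset A^i_{hom}(X)$ for $j\ge 1$ (an immediate consequence of the CK defining property $(\Pi^k_X)_\ast H^\ast(X)=H^k(X)$, since $(\Pi^{2i-j}_X)_\ast$ acts as zero on $H^{2i}(X)$ whenever $j\ne 0$), and conclude $A^i(X)=A^i_{(0)}(X)$ for $i\in\{1,2,3,4\}$. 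Multiplicativity of the MCK bigrading then yields $A^1(X)\cdot A^4(X)\subset A^5_{(0)}(X)$ and $A^2(X)\cdot A^3(X)\subset A^5_{(0)}(X)$.

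Next, I would verify that the cycle class map injects $A^5_{(0)}(X)$ into $H^{10}(X,\QQ)$. Using the motivic isomorphism $h(X)\cong h(S)(-3)\oplus\bigoplus\one(\ast)$ of Theorem \ref{main0} and the standard refined splitting $h(S)\cong\one\oplus\one(-1)^{\rho}\oplus t(S)\oplus\one(-2)$ for any K3 surface, one observes that the transcendental summand $t(S)(-3)$ is concentrated in cohomological degree $2+6=8$, so it sits inside $h^8(X)$ rather than $h^{10}(X)$. Hence $h^{10}(X)$ is a pure sum of Tate twists $\one(-5)$, and consequently $A^5_{(0)}(X)=A^5(h^{10}(X))$ is a finite-dimensional $\QQ$-vector space that injects into $H^{10}(X,\QQ)$ via the cycle class map.

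The final ingredient, $H^{10}(X,\QQ)\cong\QQ^4$, is immediate from Proposition \ref{8}: since $X$ is of K3 type, the only non-vanishing Hodge number with $p+q=10$ is $h^{5,5}(X)=4$ (the exceptional K3-type pair $h^{5,3}=h^{3,5}=1$ contributes to $H^8$, not $H^{10}$). Concatenating the three steps yields the claimed injection $R^5(X)\hookrightarrow H^{10}(X,\QQ)\cong\QQ^4$. I do not anticipate any genuine obstacle here; once Theorems \ref{main} and \ref{main0} are in place, the corollary amounts to a short bookkeeping exercise in the MCK bigrading, with the only slightly non-formal step being the identification of $h^{10}(X)$ with a sum of Tate twists via the K3-splitting of $h(S)$.
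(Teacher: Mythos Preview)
Your first step---showing $R^5(X)\subset A^5_{(0)}(X)$---is correct and matches the paper's argument exactly. The second step, however, contains a genuine gap. From the fact that $t(S)(-3)$ has cohomology concentrated in degree $8$ you infer that ``it sits inside $h^8(X)$ rather than $h^{10}(X)$'', and hence that $h^{10}(X)$ is a sum of Lefschetz motives. This inference is not valid: the isomorphism $h(X)\cong t(S)(-3)\oplus\bigoplus\one(\ast)$ of Theorem \ref{main0} is not a priori compatible with the specific CK decomposition $\{\Pi^i_X\}$ produced in Theorem \ref{main}. In the category of Chow motives there can be non-zero morphisms between $t(S)(-3)$ and $\one(-5)$ (for instance $\operatorname{Hom}(\one(-5),t(S)(-3))\cong A^2_{hom}(S)$), so transporting $\Pi^{10}_X$ across the isomorphism may well yield a projector with non-trivial component along $t(S)(-3)$. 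A purely cohomological degree count cannot rule this out.

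The paper closes this gap by appealing not to the K3 side but to the explicit \emph{construction} of $\Pi^{10}_X$ carried out in the proof of Theorem \ref{main}: for $i\neq 8$ the projector $\Pi^i_X$ is built as a sum $\sum_k a_{ik}^\vee\times a_{ik}$ of products of algebraic cycles restricted from $\operatorname{OGr}(3,8)$, and in particular $\Pi^{10}_X$ is supported on $V\times W\subset X\times X$ with $\dim V=5$ and $\dim W=3$. A Bloch--Srinivas type factorization then shows that the action of $\Pi^{10}_X$ on $A^5_{hom}(X)$ factors through $A^0_{hom}(\widetilde{W})=0$; since $\Pi^{10}_X$ acts as the identity on $A^5_{(0)}(X)$, one obtains $A^5_{(0)}(X)\cap A^5_{hom}(X)=0$. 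Equivalently, the very form of $\Pi^{10}_X$ already exhibits $h^{10}(X)\cong\one(-5)^{\oplus 4}$, but this comes from the construction, not from the motivic splitting via $S$. Your overall strategy is fine; you just need to replace the K3-based cohomological argument by this direct use of the shape of $\Pi^{10}_X$.
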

   
   \begin{proof} Since $A^i_{hom}(X)=0$ for $i\not=5$, we have $A^i(X)=A^i_{(0)}(X)$ for $i\not=5$. Combined with Theorem \ref{main}, this implies that $R^5(X)\subset A^5_{(0)}(X)$. It only remains to check that the cycle class map induces an injection
   \[ A^5_{(0)}(X)\ \hookrightarrow\ H^{10}(X,\QQ) \ .\]
   To this end, we observe that (by construction) the correspondence $\pi^{10}_X$ is supported on $V\times W\subset X\times X$, where $V$ resp. $W$ are subvarieties of dimension $5$ resp. $3$. As in \cite{BS}, the action of $\pi^{10}_X$ on $A^5(X)$ factors over $A^0(\wt{W})$ (where $\wt{W}\to W$ is a resolution of singularities). In particular, the action of $\pi^{10}_X$ on $A^5_{hom}(X)$  factors over  $A^0_{hom}(\wt{W})=0$ and so is zero. But the action of  $\pi^{10}_X$ on $A^5_{(0)}(X)$  is the identity, and so
    \[     A^5_{(0)}(X)\cap A^5_{hom}(X)=0\ .\]
       \end{proof}

\vskip1cm
\begin{nonumberingt} Thanks to the reviewer for helpful comments.
Thanks to Kai for beautifully playing his "Duo de Printemps" \quarternote\twonotes\fullnote.
\end{nonumberingt}

\vskip1cm

\end{document}